\newtheorem{thm}{Theorem}[section]
\newtheorem{lem}[thm]{Lemma}
\newtheorem{cor}[thm]{Corollary}
\theoremstyle{definition}
\newtheorem{remark}[thm]{Remark}
\newcommand{\pr}[1]{{\mathbb P}^{#1}}
\newcommand{\skipit}[1]{{}}
\newcommand{\reg}{\operatorname{reg}}
\newcommand{\prfend}{\hbox to7pt{\hfil}
\par\vskip-\baselineskip\hbox to\hsize
{\hfil\vbox {\hrule width6pt height6pt}}\vskip\baselineskip}
\newcommand{\myarrow}[2]{\hbox to #1pt{\hfil$\to$\hfil}{\hskip-#1pt{\raise
10pt\hbox to#1pt{\hfil$\scriptscriptstyle #2$\hfil}}}}
\begin{document}

\title[Fat linear subspaces] {Inclics, Galaxies, Star Configurations and Waldschmidt constants}

\author{Giuliana Fatabbi, Brian Harbourne and Anna Lorenzini}

\address{
Dip. di Matematica e Informatica\\
Universit\`a di Perugia\\
via Vanvitelli 1\\
06123 Perugia, Italy} \email{fatabbi@dmi.unipg.it}

\address{
Department of Mathematics\\
University of Nebraska\\
Lincoln, NE 68588-0130 USA}
\email{brianharbourne@unl.edu}

\address{
Dip. di Matematica e Informatica\\
Universit\`a di Perugia\\
via Vanvitelli 1\\
06123 Perugia, Italy} \email{annalor@dmi.unipg.it}

\date{May 9, 2013}

\begin{abstract}
This paper introduces complexes of linear varieties,
called inclics (for INductively Constructible LInear ComplexeS).
As examples, we study galaxies (these are constructed starting with
a star configuration to which we add general points in a larger projective space).
By assigning an order of vanishing
(i.e., a multiplicity) to each member of the complex,
we obtain fat linear varieties (fat points if all of the linear varieties are points).
The scheme theoretic union of these fat linear varieties gives an inclic
scheme $X$. For such a scheme, we show there is an inductive procedure for
computing the Hilbert function of its defining ideal $I_X$,
regardless of the choice of multiplicities. As an application, we show how our results
allow the computation of the Hilbert functions of, for example, symbolic powers $(I_X)^{(m)}$ for arbitrary $m$
of many new examples of radical ideals $(I_X)$, and we explicitly
compute the Waldschmidt constants $\gamma(I_X)$ for galactic inclics $X$.
\end{abstract}

\subjclass[2000]{Primary:
13F20, 
14C20; 
Secondary:
13A02, 
14N05} 

\keywords{star configurations, galaxies, inclic schemes,
symbolic powers, fat points, projective space, Hilbert functions}

\thanks{The second author's work on this project was sponsored by the National Security Agency under Grant/Cooperative agreement ``Advances on Fat Points and Symbolic Powers,'' Number H98230-11-1-0139. The United States Government is authorized to reproduce and distribute reprints notwithstanding any copyright notice. The second author also
thanks the University of Perugia and his coauthors for their generous hospitality
during his visit in 2010 when this work was initiated.}

\maketitle

\section{Introduction}\label{intro}
There is a long tradition of research on ideals of
unions of linear varieties in projective spaces.
Such an ideal is the intersection of ideals generated by linear forms.
Examples include square free monomial ideals,
ideals of star configurations \cite{GHM} and ideals of finite sets of points.
Research started with the radical case (see \cite{D, DS, GO, HH, HS, L} for example)
but there is also a lot of interest in ideals of
schemes of linear varieties with assigned multiplicities, including but not limited to fat points
(see \cite{CHT, Fa, FHL, FaL, FrL, Fr, FMN, valla} for just a few examples).
The ideals in the uniform multiplicity case are symbolic power ideals;
ideals in this special case are also of interest and are receiving increasing attention
(see \cite{BH, BH2, GHM, GMS, GHV2, HaHu, M} for example), but there are
few cases where the Hilbert functions of arbitrary symbolic powers can be determined.

In this paper we introduce {\em inclic schemes}. These are schemes whose components
comprise a complex of linear varieties
called an inclic (for INductively Constructible LInear Complex).
An inclic scheme is obtained by arbitrarily assigning a multiplicity (i.e., an assigned order of vanishing)
to each component. Our main foundational result provides a recursive procedure for computing Hilbert functions of
ideals of inclic schemes (in a forthcoming paper we will study minimal free resolutions).
In certain cases this procedure can be applied to compute
Hilbert functions of arbitrary symbolic powers of radical ideals. This substantially extends the range of examples
of ideals for which this is possible. As an application, we define galactic schemes
and explicitly compute the Waldschmidt constants of certain galactic schemes built up from star configurations.
(A Waldschmidt constant is an asymptotic measure of the initial degrees
of the symbolic powers of an ideal. These have arisen in a range of previous research
(such as \cite{BH, Ch, DHST, GHV1, HaHu, M, W, W2})
and are related to work on multiplier ideals
(see \cite{EV, HaHu} and \cite[Proposition 10.1.1 and Example 10.1.3]{La}).)

\subsection{Inclics}\label{inclic}
To define inclics, let $n>0$ be an integer. We work in the projective space
$\pr {n}$ over an arbitrary field $K$ (some results will require
the characteristic to be 0). An {\em inclic}
$${\mathcal C}={\mathcal C}(n,r,s; L_0, L_1,\ldots,L_r, H_0, H_1,\ldots,H_s)$$
is a collection of linear subvarieties $L_0, L_1,\ldots,L_r, H_0, H_1,\ldots,H_s\subsetneq \pr {n}$
such that the following conditions hold:
\begin{itemize}
\item [(C1)] $H_0, H_1,\ldots,H_s$ are distinct hyperplanes;
\item [(C2)] $L_i\subseteq H_0$ for $i>0$ but $L_0\not\subseteq H_0$;
\item [(C3)] if $L_i\subseteq L_j$, then $i=j$; and
\item [(C4)] for all $i\geq 0$ and $j>0$ we have $L_i\not\subseteq H_j$.
\end{itemize}

If $s=0$ and each $L_i$ is a point, then the inclic is just a choice of $r$ points $L_i$, $0<i\leq r$, of the hyperplane $H_0$
and one point $L_0$ not on $H_0$.

\subsection{Galaxies}\label{gals}
An interesting special case of an inclic comes from what we refer to as a {\em galaxy}.
For this, we start with a star configuration. We recall \cite{GHM} that a star
configuration in $\pr n$ is defined by a set of
$u>n$ hyperplanes $A_1,\ldots,A_u\subset\pr n$ such that the
intersection of any $i$ of the hyperplanes
has dimension at most $n-i$. The star configuration of
codimension $e\leq n$ is the set $S(n,e,u)$ of the $\binom{u}{e}$
linear varieties arising as intersections of $e$ arbitrary
distinct choices $A_{i_1},\ldots,A_{i_e}$ of the hyperplanes.
Let $N\geq 1$ be an integer and regard $\pr n$ as a linear subvariety of $\pr {n+N}$.
The galaxy ${\mathcal G}={\mathcal G}(n,N,e,u,h)={\mathcal G}(n,N,e,u,h;S(n,e,u),{\mathcal H})$
consists of $S(n,e,u)$ and a choice of $h$ general points
${\mathcal H}=\{P_1,\ldots,P_h\}\subset\pr{n+N}$.
We refer to $S(n,e,u)$ as the galactic center, to $\pr n$ as the galactic ($n$-)plane, and to
${\mathcal H}$ as the galactic halo.

When $h=N$, this is an inclic ${\mathcal C}={\mathcal C}(n+N,r,0; L_0, L_1,\ldots,L_r, H_0)$ in which $L_0=P_h$,
$H_0$ is the linear span of $\pr n$ and the points $P_1,\ldots,P_{h-1}$, $r=\binom{u}{e}+h-1$
and the linear varieties $L_i$ are the points $P_j$, $j<h$ and the $e$-wise intersections
comprising the elements of $S(n,e,u)$.

\subsection{Hilbert functions, symbolic powers, Waldschmidt constants and resurgences}\label{HfWc}
Let $R=K[\pr n]=K[x_0,\ldots,x_n]$. For any linear subvariety $V\in\pr n$,
the ideal $I_V\subset R$ is the ideal generated by all forms vanishing on $V$.
Let $l_i$ and $h_j$ be non-negative integers.
An inclic scheme is a scheme of the form $X=\sum_{i\geq0}l_iL_i+\sum_{j>0}h_jH_j$,
by which we mean the scheme defined by the ideal
$I_X=(\cap_{i\geq0}I_{L_i}^{l_i})\bigcap(\cap_{j>0}I_{H_j}^{h_j})$.
We note that such ideals  are homogeneous and saturated.
Moreover, if $I=\sqrt{I_X}$, then $I=(\cap_{i\geq0}I_{L_i})\bigcap(\cap_{j>0}I_{H_j})$,
and for any $m\geq 1$, the symbolic power $I^{(m)}$ is $I^{(m)}=I_X$
in the case that $l_i=h_i=m$ for all $i$ and $j$.

For any homogeneous ideal $I\subseteq R$, the Hilbert
function of $I$ is the function $h(I)$ defined as
$h(I,t)=\dim_KI_t$, where $I_t$ is the $K$-vector space span of all
forms in $I$ of degree $t$. If $I_X\subsetneq R$ is the saturated
ideal defining a subscheme $X\subseteq \pr{n}$,
the Hilbert function of $X$ is the function
$h(X,t)=h(R,t)-h(I_X,t)=\binom{t+n}{n}-h(I_X,t)$. In all cases, we
adopt the understanding that Hilbert functions are $0$ when $t<0$.

An important value associated to any homogeneous ideal $(0)\neq I\subseteq R$ is $\alpha(I)$,
defined to be the least degree $t$ such that $h(I,t)\neq 0$.
We then define the {\em Waldschmidt constant} (introduced by Waldschmidt in \cite{W} in case $I$ is the ideal of
a finite set of points) to be
$$\gamma(I)=\lim_{m\to\infty}\frac{\alpha(I^{(m)})}{m}.$$
This limit exists but in general is hard to compute and not many specific values are known.

Another asymptotic measure related to $\gamma(I)$ which is also hard to compute
is the {\em resurgence} \cite{BH,BH2,GHV1}, defined for
any homogeneous ideal $(0)\neq I\subsetneq R=K[\pr n]$ as
$$\rho(I)=\sup\Big\{\frac{m}{r}: I^{(m)}\not\subseteq I^r\Big\}.$$
In general it is difficult to determine for which $m$ and $r$ we have
$I^{(m)}\subseteq I^r$. The interest of $\rho(I)$ is that it is
the largest real $c$ such that we always have $I^{(m)}\subseteq I^r$ for $m/r>c$,
but it is difficult to compute. It is not a priori even clear that it exists.
It is known  and easy to see that $1\leq \rho(I)$. Much deeper is the fact that
$I^{(m)}\subseteq I^r$ whenever $m/r\geq n$ \cite{ELS,HoHu} from which it follows that
$\rho(I)\leq n$ and hence $\rho(I)$ exists. This raised the issue of
getting better bounds. One of the main results for bounding and sometimes computing
$\rho(I)$ is that of \cite{BH} which says that $\frac{\alpha(I)}{\gamma(I)}\leq \rho(I)$,
and, if $I$ defines a 0-dimensional
subscheme of $\pr n$, that $\rho(I)\leq \frac{\reg(I)}{\gamma(I)}$,
where $\reg(I)$ is the Castelnuovo-Mumford regularity of $I$,
but these bounds depend on $\gamma(I)$ which has so far been computed in
relatively few cases.

\subsection{An application}\label{anapp}
Our work here builds on the results of \cite{FHL, FaL}
concerning finding Hilbert functions of fat points for points lying in a hyperplane,
but now we allow linear spaces of higher dimension, not all of which need to be contained in
a hyperplane. As an application of our results we compute galactic Waldschmidt constants.
To state our result, let ${\mathcal G}={\mathcal G}(n,N,e,u,N)$.
Let $G\subset \pr {n+N}$ be the reduced Galactic inclic scheme whose components are the
elements of ${\mathcal G}$;
i.e., $G$ is the reduced scheme theoretic union of
the $N$ points of ${\mathcal G}$ and the $\binom{u}{e}$ $e$-wise intersections of the
associated star configuration $S(n,e,u)$. Then we have:

\begin{thm}\label{galG}
Let $G$ be a reduced galactic inclic scheme as above.
\begin{itemize}
\item[(a)] We have $$\frac{2}{\gamma(I_G)}\leq \rho(I_G).$$
\item[(b)] If $K$ has characteristic 0, then
$$\gamma(I_G)=\frac{N(u-e)+u}{N(u-e)+e}.$$
\item[(c)] If $e=n$, then
$$\rho(I_G)\leq \frac{u-n+1}{\gamma(I_G)}.$$
\end{itemize}
\end{thm}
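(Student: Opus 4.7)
The plan is to handle the three parts in order, leaning on the paper's main recursive Hilbert function theorem for inclic schemes together with the bounds on $\rho$ from \cite{BH}. For part (a) I would invoke $\alpha(I_G)/\gamma(I_G)\le \rho(I_G)$ and reduce to the claim $\alpha(I_G)\ge 2$, i.e.\ that no linear form vanishes on all of $G$. This in turn reduces to showing that $G$ spans $\pr{n+N}$: the star $S(n,e,u)$ spans $\pr n$ (when $e=n$, the $n+1$ points $\bigcap_{j\ne i}A_j$ for $i=1,\ldots,n+1$ form a simplex; when $e<n$ the components have positive dimension), and by general position the $N$ halo points contribute the remaining $N$ dimensions, so no hyperplane of $\pr{n+N}$ contains $G$.

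For the upper bound in (b) I plan to exhibit an explicit form of degree $m(N(u-e)+u)/(N(u-e)+e)$ vanishing to order $m$ on every component of $G$. For each defining hyperplane $A_i$ of the star, let $\tilde A_i\subset\pr{n+N}$ be the hyperplane spanned by $A_i$ and the entire halo $\mathcal H$; for each $j=1,\ldots,N$, let $H^*_j\subset\pr{n+N}$ be the hyperplane spanned by $\pr n$ and $\mathcal H\setminus\{P_j\}$. A direct check shows that $\tilde F:=\prod_{i=1}^u\tilde A_i$ vanishes to order $e$ on each star component and to order $u$ at each halo point, while $B:=\prod_{j=1}^N H^*_j$ vanishes to order $N$ on the star and to order $N-1$ at each halo point. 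Consequently $\tilde F^a B^b$ vanishes to order $ea+Nb$ on the star and $ua+(N-1)b$ on the halo; imposing equality with $m$ forces $b=(u-e)a$ and $a=m/(N(u-e)+e)$, giving total degree $m(N(u-e)+u)/(N(u-e)+e)$. Taking $m$ divisible by $N(u-e)+e$ to make these exponents integral, and using $\gamma(I_G)=\inf_m\alpha(I_G^{(m)})/m$, yields the upper bound.

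The matching lower bound is the key step and the main obstacle. Here I would feed the inclic scheme $mG$ (the galaxy with uniform multiplicity $m$) into the paper's recursive Hilbert function theorem. The recursion naturally peels off either the off-hyperplane component $L_0=P_N$ or the distinguished hyperplane $H_0$ containing the star and the other halo points, reducing $mG$ to simpler inclics whose ideals have inductively computable Hilbert functions. Iterating on $u$, $N$, and $m$ and tracking the numerics should verify that $h(I_{mG},t)=0$ precisely up to $t<m(N(u-e)+u)/(N(u-e)+e)$, giving the reverse inequality on $\gamma(I_G)$. The characteristic-zero hypothesis enters here to guarantee the general-position assumptions under which the recursion simplifies cleanly at each step; checking that the predicted vanishing threshold is indeed attained by the recursion is the delicate bookkeeping I expect to be the hardest part of the proof.

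For part (c), I would apply $\rho(I_G)\le \reg(I_G)/\gamma(I_G)$ from \cite{BH}, which applies since $G$ is zero-dimensional when $e=n$, and then bound $\reg(I_G)\le u-n+1$. The star of points $S(n,n,u)\subset\pr n$ has regularity exactly $u-n+1$ (its Hilbert function stabilizes at $\binom{u}{n}$ in degree $u-n$), and the same recursive Hilbert function computation used in (b), now with multiplicity $1$, shows that adjoining the $N$ general halo points in the larger ambient $\pr{n+N}$ does not raise the regularity, since $N$ general points impose independent conditions in every positive degree. Substituting the value of $\gamma(I_G)$ from (b) then gives the claimed bound on $\rho(I_G)$.
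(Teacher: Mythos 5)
Parts (a) and (c) of your proposal are essentially sound and close to the paper's argument: (a) needs only $\alpha(I_G)\ge 2$ together with $\alpha(I_G)/\gamma(I_G)\le\rho(I_G)$, and your spanning argument gives that; for (c) you need only the ``easy'' direction of the paper's Lemma \ref{reglem}, namely that the $N$ halo points impose independent conditions in degree $u-n$ on forms vanishing on the star, so $\reg(I_G)\le\reg(I_{G_0})=u-n+1$. Your upper bound in (b), via the explicit forms $\tilde F^aB^b$ built from the hyperplanes $\tilde A_i$ and $H_j^*$, is correct and is genuinely different from the paper, which never writes down explicit forms; it is a nice self-contained way to get $\gamma(I_G)\le (N(u-e)+u)/(N(u-e)+e)$.

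The genuine gap is the lower bound in (b), which you correctly identify as the crux but then leave as a plan (``should verify,'' ``delicate bookkeeping''). Two specific ingredients are missing. First, the recursion has to bottom out at the star configuration itself, and for that you need the input $\alpha(I_{(re)A})=ru$ for all $r$ (Theorem \ref{BHthm}, quoted from \cite{BH}); without it the induction on the number of halo points cannot start. Second, and more importantly, the mechanism that makes the recursion computable is not ``general position in characteristic zero'': it is Lemma \ref{alfadecrlem}, which uses Euler's identity (hence $\operatorname{char}(K)=0$) to show that taking partial derivatives of a form in $I_{Y'_j}$ produces a nonzero form of one lower degree in $I_{Y'_{j+1}}$, so the values $\alpha(I_{Y'_j})$ strictly decrease by at least one as $j$ increases. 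This strict monotonicity is exactly what pins down the integer $d$ in Corollary \ref{alfa} and yields the equality $\alpha(I_W)=l_0+d$ rather than just an inequality. Finally, the paper does not attempt to compute $h(I_{mG},t)$ or even $\alpha(I_G^{(m)})$ for all $m$ as you propose; it computes $\alpha$ only along the carefully chosen subsequence $m=a_N=Nru-(N-1)re$, defined by $a_0=re$, $a_1=ru$, $a_{i+2}=2a_{i+1}-a_i$, for which the induction $\alpha(I_{a_iG_i})=a_{i+1}$ closes up (the least $j$ with $\alpha((a_i-j)G_{i-1})\le a_i$ is exactly $a_i-a_{i-1}$). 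Since the limit defining $\gamma$ exists, a subsequence suffices. Without choosing such a sequence and without the strict monotonicity lemma, the ``bookkeeping'' you defer does not obviously terminate in the claimed value.
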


In the case that $h=N=1$, $e=n$ and $u=n+1$, we can, up to choice of coordinates,
regard $G$ as the coordinate vertices in $\pr{n+1}$. In this case, $u=n+1$ so the ideal $I_G$
can be chosen to be a monomial
ideal and we recover the known
facts that $\gamma(I_G)=\frac{n+2}{n+1}$ and $\rho(I_G)=\frac{2(n+1)}{n+2}$.
We note that the bound $\frac{2}{\gamma(I_G)}\leq \rho(I_G)$
is always better than the bound $1\leq \rho(I_G)$, and the bound
$\rho(I_G)\leq \frac{s-n+1}{\gamma(I_G)}$ is often better than the bound
$\rho(I_G)\leq n+N$ (such as if $u$ is not too big, say $u\leq 2n+N$).

The foundation for these results is our work on inclics. In section \ref{prelims}
we prove a lemma we will use later and we set up our technical notation.
In section \ref{mainthmsec} we prove our main foundational results
for inclics. In section \ref{galsec} we prove Theorem \ref{galG}.

\section{Preliminaries}\label{prelims}

Let $X\subset\pr n$ be a set of $c$ points regarded as a reduced subscheme.
It is well known that $\reg(I_X)=\tau+1$ where $\tau$
is the least degree $t$ such that the points impose independent conditions on
forms of degree $t$ (i.e., such that $h(I_X,t)=\binom{t+n}{n}-c$).

\begin{lem}\label{reglem}
Let $H\subset\pr n$ be a hyperplane and let $X\subset\pr n$ be a set of $c+1$
points regarded as a reduced subscheme,
with exactly $c$ of the points lying in $H$. Let $X'=X\cap H,$ then $\reg(I_{X'}) = \reg(I_X)$.
\end{lem}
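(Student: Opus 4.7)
\textbf{Proof plan for Lemma \ref{reglem}.} The plan is to invoke the criterion recalled immediately above the lemma: for any reduced finite set of points $Y\subset\pr n$, $\reg(I_Y)=\tau_Y+1$, where $\tau_Y$ is the least degree $t$ at which $Y$ imposes independent conditions on forms of degree $t$. Thus it suffices to prove $\tau_X=\tau_{X'}$, and each inequality is shown by exhibiting, for every point, a separating form of the appropriate degree.

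The inequality $\tau_{X'}\leq\tau_X$ is immediate: any form of degree $t$ separating a point $Q\in X'$ from $X\setminus\{Q\}$ automatically separates $Q$ from the subset $X'\setminus\{Q\}$, so if $X$ imposes independent conditions on forms of degree $t$, so does $X'$.

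For the reverse inequality, let $P$ denote the unique point of $X\setminus H$ and let $h\in R_1$ be a linear form cutting out $H$. Set $t=\tau_{X'}$; I will produce, for each $Q\in X$, a form in $R_t$ vanishing on $X\setminus\{Q\}$ but nonzero at $Q$. If $Q=P$, then $h^t$ does the job, as it vanishes on the hyperplane $H\supset X'$ while $h^t(P)\neq 0$. If instead $Q\in X'$, pick (by the hypothesis on $\tau_{X'}$) a form $f\in R_t$ with $f|_{X'\setminus\{Q\}}=0$ and $f(Q)\neq 0$, and define
$$f' \;=\; f \;-\; \frac{f(P)}{h(P)^t}\, h^t.$$
By construction $f'(P)=0$; since $h$ vanishes on $H\supset X'$, the correction term vanishes identically on $X'$, so $f'|_{X'\setminus\{Q\}}=0$ and $f'(Q)=f(Q)\neq 0$. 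Thus $X$ imposes independent conditions in degree $t$, giving $\tau_X\leq\tau_{X'}$.

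There is no deep obstacle here; the entire argument rests on the single observation that $h^t$ furnishes a one-parameter family of ``adjustments'' that alter a form's value at $P$ while leaving its values on $X'$ untouched. The only point requiring mild care is that the argument uses $t\geq 1$ so that $h^t$ is a nontrivial form; this is fine in the regime of interest (i.e.\ whenever $\tau_{X'}\geq 1$), and the tiny cases can be checked directly.
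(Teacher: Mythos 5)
Your proof is correct and, in one direction, genuinely more self-contained than the paper's. Both arguments reduce to the criterion $\reg(I_Y)=\tau_Y+1$ stated just before the lemma, and both prove $\tau_X\le\tau_{X'}$ by observing that the one point $P$ off $H$ imposes an independent condition on top of those imposed by $X'$; but where the paper waves at this by saying $P$ ``can be regarded as general,'' you make it rigorous by exhibiting explicit separators: $h^t$ for $P$, and $f-\frac{f(P)}{h(P)^t}h^t$ for each $Q\in X'$. For the opposite inequality $\tau_{X'}\le\tau_X$ the paper cites two external results (Corollary 3.3 of Franceschini--Lorenzini and Proposition 2.1 of Davis--Geramita), whereas you use the elementary fact that a subset of a point set imposing independent conditions again imposes independent conditions. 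Your route buys a short, citation-free proof; the paper's citations presumably buy brevity and consistency with the sources it builds on. (One implicit point common to both proofs: $\tau_{X'}$ may be computed using forms on $\pr n$ or on $H$ interchangeably, since restriction of forms to $H$ is surjective in each degree.)

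One caveat on your closing remark: the degenerate case $\tau_{X'}=0$ is not merely a case ``to be checked directly'' --- there the statement actually fails. For $c=1$ one has $\reg(I_{X'})=1$ (a single point) but $\reg(I_X)=2$ (two points), consistent with your observation that $h^0=1$ cannot serve as a separator. This is a defect of the lemma as stated (the paper's own proof has the same gap, since for $\tau'=0$ the extra point does not impose an additional condition in degree $0$), and it is harmless in the paper's application, where $X'$ always contains at least two points; but you should state the needed hypothesis ($c\ge 2$, equivalently $\tau_{X'}\ge 1$) rather than assert the tiny cases go through.
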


\begin{proof} Choose coordinates such that $I_{H}=(x_0)$, where
$K[\pr n]=K[x_0,\ldots,x_n]$ and so $K[H]=K[x_1,\ldots,x_n]$.
Let $\tau'=\reg(I_{X'})-1$ and let $\tau=\reg(I_{X})-1$. Thus the points of $X'$
impose independent conditions on forms of degree $\tau'$ in $K[H]$, and
hence also in $K[x_0,\ldots,x_n]$.
Let $p$ be the point of $X$ not in $H$;
up to choice of coordinates
we can regard $p$ as being general, hence
it imposes an additional independent condition. Thus $\tau\leq \tau'$.

On the other hand, it follows from \cite[Corollary 3.3]{FrL} and from \cite[Proposition 2.1]{DG} that
$\tau'\leq\tau$, hence $\tau=\tau'$, so $\reg(I_{X'}) = \reg(I_X)$.
\end{proof}

Hereafter we study fat inclic schemes for some fixed hyperplane $H_0\subset \pr n$.
Clearly, we may choose coordinates such that $I_{H_0}=(x_0)$,
so $R'=K[\pr {n-1}]=K[H_0]=K[x_1,\ldots,x_n]$, and
$I_{L_0}=(x_{k+1},\ldots,x_n)\subset K[\pr n]=R$.
We fix such a choice of coordinates for the rest of this article.
We denote the linear forms defining $H_j$ for $j>0$ by $\eta_j$.
We also take $Y$ to be the fat subscheme $Y=l_1L_1+\cdots+l_rL_r$
of $\pr {n}$. In addition we define $Y'=Y\cap H_0$ and
$Y'_i=Y_i\cap H_0$ for $Y_i=l_1(i)L_1+\cdots+l_r(i)L_r$, where
$l_j(i)=\max(0,l_j-i)$. Thus $Y'_0=Y'$ and $I_{Y'_i}=I_{Y_i}\cap K[x_1,\dots,x_n]$.
Moreover, $I_{Y_i}=I_Y:(x_0^i)$.
We set $Z=l_0L_0$, $W=Y\cup Z$, $X=W\bigcup\cup_{j>0}h_jH_j$,
$L_0'=L_0\cap H_0$, $Z'=Z\cap H_0$, $W'=W\cap H_0$ and $X'=X\cap H_0$.

The following notation will be useful. If $J'\subseteq R'=K[x_1,\dots,x_n]$
is a homogeneous ideal, set $J'^{(k,t)}=J'\cap (I_{L_0}')^t$. Thus
$(J'^{(k,t)})_i=(J')_i\cap ((I_{L_0}')^t)_i$. Note that in the special case that
$k=0$ (i.e., that $L_0$ is the point $p$ defined in $\pr {n}$ by $(x_1,\ldots,x_n)$),
we have $(J'^{(0,t)})_i=J'_i$ for $i\geq t$
and $(J'^{(0,t)})_i=0$ for $i< t$; in short, if we know $J'$, then we
immediately know $J'^{(0,t)}$ for all $t$.

Note that $R$ has a bi-grading; i.e., the direct sum $R=\oplus_{ij}R_{ij}$
has the property that $R_{ij}R_{st}=R_{i+s,j+t}$,
where $R_i$ is the $K$-vector space span of the forms in
$R'=K[x_1,\ldots,x_n]$ of total degree $i$, and
$R_{ij}$ is the $K$-vector subspace $x_0^jR_i\subset R$.
We say an element $F\in R$ is bi-homogeneous if $F\in R_{ij}$
for some $i$ and $j$, and
we say an ideal $I\subseteq R$ is bi-homogeneous if
$I=\oplus_{ij}I_{ij}$, where $I_{ij}=I\cap R_{ij}$. As usual,
$I$ is bi-homogeneous if and only if $I$ has bi-homogeneous generators,
and intersections, sums and products of bi-homogeneous ideals are bi-homogeneous.

\section{Hilbert Functions}\label{mainthmsec}

We can now state and prove our main theorem.

\begin{thm}\label{HF}
Let $Y'$, $Y'_i$,  $Z$, $W$ and $X$ be as above, let $l'=\max(l_1,\dots,l_r)$. Then
$I_X=\eta_1^{h_1}\cdots\eta_s^{h_s}I_W$ and $h(I_X,t)=h(I_W,t-\sum_{j>0}h_j)$, where
$$I_W=\oplus_jx_0^j(I_{Y'_j})^{(k,l_0)}=
(\oplus_{0\leq j< l'}x_0^j(I_{Y'_j})^{(k,l_0)})\bigoplus\oplus_{j\geq l'} x_0^jI_{Z'}=
(\oplus_{0\leq j< l'}x_0^j(I_{Y'_j})^{(k,l_0)})\bigoplus x_0^{l'}I_Z,$$
and $h(I_W,t) =\sum_{j=0}^\lambda h((I_{Y'_j})^{(k,l_0)},t-j) +h(I_Z,t-l')$, where
$h(I_Z,t-l')=0$ if $t<l'+l_0$ and
$h(I_Z,t-l')= \binom{t-l'+n}{n}-\sum_{0\leq i<l_0}\binom{t-l'-i+k}{k}\binom{i+n-k-1}{n-k-1}$
for $t\geq l'+l_0$.
\end{thm}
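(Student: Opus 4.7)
The plan is to verify the four claims in sequence.

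First, for $I_X = \eta_1^{h_1}\cdots\eta_s^{h_s} I_W$, I would use conditions (C2) and (C4) to observe that no $L_i$ (for $i \ge 0$) lies in any $H_j$ (for $j > 0$), so $\eta_j \notin I_{L_i}$. Since each $I_{L_i}$ is generated by a regular sequence of linear forms in $R$, each $I_{L_i}^{l_i}$ is primary, and the associated primes of $R/I_W$ are exactly the $I_{L_i}$; consequently every $\eta_j$ is a nonzero divisor modulo $I_W$. By (C1) the hyperplanes $H_j$ are distinct, so the $\eta_j$ are pairwise non-associate primes in the UFD $R$, giving $\bigcap_{j>0}(\eta_j^{h_j}) = (\eta_1^{h_1}\cdots\eta_s^{h_s})$. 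Writing $I_X = I_W \cap \bigcap_{j>0}(\eta_j^{h_j})$ and using that the product $\eta_1^{h_1}\cdots\eta_s^{h_s}$ is a nonzero divisor modulo $I_W$, this intersection collapses to $\eta_1^{h_1}\cdots\eta_s^{h_s}\, I_W$. Multiplication by this product is then an injective $K$-linear map of degree $\sum_j h_j$ from $(I_W)_{t-\sum h_j}$ onto $(I_X)_t$, yielding $h(I_X,t) = h(I_W,\,t-\sum_{j>0}h_j)$.

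Next, for the bigraded decomposition of $I_W$: the choice $I_{H_0}=(x_0)$ sets up the bigrading $R = \bigoplus_{i,j} x_0^j R'_i$, and $I_W = I_Y \cap I_Z$ is bi-homogeneous because $I_Z = (x_{k+1},\ldots,x_n)^{l_0}$ is generated in $R'$, while each $I_{L_i}$ with $i > 0$ is generated by $x_0$ together with linear forms of $R'$. For a bi-homogeneous $F = x_0^j G$ with $G \in R'$, I would check membership in $I_Y$ and $I_Z$ separately: on one hand $x_0^j G \in I_Y$ iff $G \in (I_Y : x_0^j) \cap R' = I_{Y'_j}$, by the preliminaries' definition of $I_{Y'_j}$; on the other hand, since $x_0$ appears in no generator of $I_Z$, we have $x_0^j G \in I_Z$ iff $G \in I_Z \cap R' = I_{L_0'}^{l_0}$. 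Combining, $F \in I_W$ iff $G \in I_{Y'_j} \cap I_{L_0'}^{l_0} = (I_{Y'_j})^{(k,l_0)}$, which is the first displayed identity. When $j \ge l'$, every $l_i(j) = 0$, so $Y'_j$ is empty and $I_{Y'_j} = R'$; hence $(I_{Y'_j})^{(k,l_0)} = I_{L_0'}^{l_0} = I_{Z'}$, and reassembling $\bigoplus_{j \ge l'} x_0^j I_{Z'} = x_0^{l'}\bigoplus_{a \ge 0} x_0^a I_{Z'} = x_0^{l'} I_Z$, producing the remaining two descriptions.

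The Hilbert function formula then follows immediately from the bigraded decomposition: multiplication by $x_0^j$ identifies the degree-$(t-j)$ piece of $(I_{Y'_j})^{(k,l_0)} \subset R'$ with the degree-$t$ piece of $x_0^j(I_{Y'_j})^{(k,l_0)} \subset R$, so the $j$-th summand contributes $h((I_{Y'_j})^{(k,l_0)},\,t-j)$, and the tail $j \ge l'$ is collected into $h(x_0^{l'} I_Z,\,t) = h(I_Z,\,t-l')$. For the closed form of $h(I_Z,\,t-l')$, I would use the tensor decomposition $R/I_Z \cong K[x_0,\ldots,x_k] \otimes_K K[x_{k+1},\ldots,x_n]/(x_{k+1},\ldots,x_n)^{l_0}$, whose degree-$s$ piece splits as a direct sum, over $0 \le i < l_0$, of the tensor of the degree-$(s-i)$ piece of $K[x_0,\ldots,x_k]$ and the degree-$i$ piece of the truncated quotient, producing $\sum_{0 \le i < l_0}\binom{s-i+k}{k}\binom{i+n-k-1}{n-k-1}$; subtracting from $\binom{s+n}{n}$ and noting that $I_Z$ contains no forms of degree below $l_0$ gives the stated formula. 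The main technical point is the reduction $x_0^j G \in I_Y \iff G \in I_{Y'_j}$ in the bigraded argument: this is where the preliminaries' definitions $I_{Y_j} = I_Y : (x_0^j)$ and $I_{Y'_j} = I_{Y_j} \cap R'$ absorb the real content, so everything downstream is bookkeeping once this colon-ideal identification is in hand.
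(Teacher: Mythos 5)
Your proof is correct and follows the paper's argument in its essentials: the same bigraded decomposition $R=\oplus_{ij}x_0^jR'_i$, the same identification $(I_Y)_{ij}=x_0^j(I_{Y'_j})_i$ via the colon-ideal definition of $Y'_j$, and the same collapse of the tail $j\geq l'$ to $x_0^{l'}I_Z$. The only differences are that you supply full justifications for two things the paper dispatches quickly --- the nonzerodivisor/primary-decomposition argument for $I_X=\eta_1^{h_1}\cdots\eta_s^{h_s}I_W$ (which the paper calls ``obvious'') and the tensor-product computation of $h(I_Z,\cdot)$ (which the paper cites from \cite[Lemma 2.1]{DHST}) --- and both of your justifications are sound.
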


\begin{proof} It is obvious that
$I_X=\eta_1^{h_1}\cdots\eta_s^{h_s}I_W$ and $h(I_X,t)=h(I_W,t-\sum_{j>0}h_j)$,
so now we consider $I_W$ and $h(I_W,t)$. To begin, note that the
ideals $I_{L_i}\subset R$ are bi-homogeneous (having bi-homogeneous
generators), so $I_Y$ and $I_W=I_Y\cap I_Z$ are bi-homogeneous, hence
$I_Y=\oplus_{ij}(I_Y)_{ij}$ and $I_W=\oplus_{ij}((I_Y)_{ij}\cap (I_Z)_{ij})$.
But $F\in (I_Y)_{ij}$ if and only if
$F=x_0^jG$ where $G\in (I_{Y'_j})_i$; i.e., $(I_Y)_{ij}=x_0^j(I_{Y'_j})_i$.
Thus $I_Y=\oplus_{ij}x_0^j(I_{Y'_j})_i$, and since
$(I_Z)_{ij}=x_0^j(I_{Z'})_i$, we have
\begin{equation}\tag{*}
I_W=\oplus_{ij}((x_0^j(I_{Y'_j})_i)\cap(I_Z)_{ij})=
\oplus_{ij}((x_0^j(I_{Y'_j}\cap I_{Z'})_i)=
\oplus_{ij}(x_0^j((I_{Y'_j})^{(k,l_0)})_i)=\oplus_jx_0^j(I_{Y'_j})^{(k,l_0)}.
\end{equation}
But for $j\geq l'$ we have $I_{Y'_j}=R'$ and hence $(I_{Y'_j})^{(k,l_0)}=I_{Z'}$, so
$$I_W=\oplus_jx_0^j(I_{Y'_j})^{(k,l_0)}=
(\oplus_{0\leq j< l'}x_0^j(I_{Y'_j})^{(k,l_0)})\bigoplus\oplus_{j\geq l'} x_0^jI_{Z'}=
(\oplus_{0\leq j< l'}x_0^j(I_{Y'_j})^{(k,l_0)})\bigoplus x_0^{l'}I_Z.$$
The fact that $h(I_W,t)=\sum_{j=0}^{l'-1} h((I_{Y'_j})^{(k,l_0)},t-j)+ h(I_Z,t-l')$
is now immediate, keeping in mind that the Hilbert function is computed with respect
to the singly graded structure of $R$; i.e., $(I_W)_t=\oplus_{i+j=t}(I_W)_{ij}$.
But the value of $h(I_Z,t-l')$ is known; the formula
given in the statement of the theorem comes from \cite[Lemma 2.1]{DHST}.
\end{proof}

The case with $k=0$ is particularly simple; in this case, if we know the Hilbert functions
of $Y'_j$ for all $j$, then we know the Hilbert functions of $W$ and hence $X$.

\begin{cor}\label{HFcor}
Under the hypotheses of Theorem \ref{HF}, let $\lambda=\min(l'-1,t-l_0).$ If we also have $k=0$, then
$$h(I_W,t)=\sum_{j=0}^\lambda h(I_{Y'_j},t-j)+ \sum_{j=l'}^{t-l_0} \binom{t-j+n-1}{n-1},$$
which is $h(I_W,t)=\sum_{j=0}^\lambda h(I_{Y'_j},t-j)$ for $t<l'+l_0$ and
$$h(I_W,t)=\sum_{j=0}^\lambda h(I_{Y'_j},t-j)+ \binom{t-l'+n}{n}-\binom{l_0+n-1}{n}$$
for $t\geq l'+l_0$.
\end{cor}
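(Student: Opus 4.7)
The plan is to specialize Theorem \ref{HF} directly to $k=0$; no new machinery beyond that theorem and the hockey-stick identity is required. Theorem \ref{HF} supplies
$$h(I_W,t)=\sum_{j=0}^{l'-1} h((I_{Y'_j})^{(k,l_0)},t-j) + h(I_Z,t-l'),$$
and both pieces collapse substantially once $k=0$.

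For the first sum, I invoke the observation from section \ref{prelims}: when $k=0$, $I_{L_0}'=(x_1,\dots,x_n)$ is the irrelevant ideal of $R'$, so for any homogeneous $J'\subseteq R'$ the truncation $J'^{(0,l_0)}=J'\cap (x_1,\dots,x_n)^{l_0}$ satisfies $(J'^{(0,l_0)})_i = J'_i$ when $i\ge l_0$ and $0$ otherwise. Applied with $J'=I_{Y'_j}$ and $i=t-j$, the $j$-th summand survives exactly when $j\le t-l_0$, and intersecting with the range $0\le j\le l'-1$ cuts the sum down to $\sum_{j=0}^{\lambda}h(I_{Y'_j},t-j)$, where $\lambda=\min(l'-1,t-l_0)$.

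For the second term I appeal to the explicit formula at the end of Theorem \ref{HF}. With $k=0$ every factor $\binom{t-l'-i}{0}$ equals $1$ (since $t-l'-i\ge 0$ throughout the summation range when $t\ge l'+l_0$), giving
$$h(I_Z,t-l')=\binom{t-l'+n}{n}-\sum_{i=0}^{l_0-1}\binom{i+n-1}{n-1}$$
for $t\ge l'+l_0$ and $0$ otherwise. The hockey-stick identity $\sum_{i=0}^{M}\binom{i+n-1}{n-1}=\binom{M+n}{n}$ with $M=l_0-1$ converts the right-hand sum into $\binom{l_0+n-1}{n}$, which yields the corollary's closed form. To obtain the alternative summation form, I apply the same identity in reverse to write $\binom{t-l'+n}{n}-\binom{l_0+n-1}{n}=\sum_{s=l_0}^{t-l'}\binom{s+n-1}{n-1}$ and reindex $s=t-j$, producing $\sum_{j=l'}^{t-l_0}\binom{t-j+n-1}{n-1}$; this last sum is vacuously zero precisely when $t-l_0<l'$, so it uniformly handles both regimes.

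The only thing to track is that the single threshold $t-l_0$ controls \emph{both} the truncation of the first sum (forcing $\lambda=t-l_0$ when $t<l'+l_0$ and $\lambda=l'-1$ otherwise) and the vanishing of the second sum, so once the two simplifications above are assembled, the promised case split falls out immediately. I do not expect any genuine obstacle: the statement is essentially a bookkeeping consequence of Theorem \ref{HF}, and the only nontrivial identity used is the hockey-stick formula for sums of binomial coefficients.
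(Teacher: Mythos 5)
Your proof is correct and follows essentially the same route as the paper: specialize Theorem \ref{HF} at $k=0$, use the truncation property of $J'^{(0,l_0)}$ to cut the first sum down to $j\le\lambda$, and simplify the $I_Z$-term (the paper reads off $\sum_{j\ge l'}h(I_{Z'},t-j)$ directly from the direct-sum decomposition, while you recover the same sum from the closed formula via the hockey-stick identity -- an immaterial difference). No gaps.
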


\begin{proof} This follows immediately from Theorem \ref{HF}, since
$(I_{Z'})_i=R'_i$ (so $h(I_{Z'},t-j)=\binom{t-j+n-1}{n-1}$ and
$((I_{Y'_j})^{(0,l_0)})_{t-j}=(I_{Y'_j})_{t-j}$ for $t-j\geq l_0$, that is, for $j\leq t-l_0$).
\end{proof}

\begin{remark}\label{HFrem}
Examples for which we would know the Hilbert functions of
$Y'_j\subset H_0$ for all $j$ can be constructed inductively.
For example, start with a flag of projective spaces $V_1\subset V_2\subset \cdots \subset V_n$,
each contained in the next as a linear subvariety, with $V_i\simeq \pr {i}$.
Let $U_1\subset V_1$ be any finite set of points $u_{11},\ldots, u_{1s}$.
Let $U_2$ consist of a point $u_{21}\in V_2\setminus V_1$ together with any lines
$u_{22},\ldots,u_{2s_2}\subset V_2$ not containing
$u_{21}$ or any component of $U_1$ (i.e., not containing $u_{1i}$ for any $i$).
Continue in this way, so
$U_i$ consists of a point $u_{i1}\in V_i\setminus V_{i-1}$ and a finite set of hyperplanes
$u_{ij}\subset V_i$ not containing $u_{i1}$ and not containing any of the
components of $U_j$ for $j<i$.
Then $U_1\cup\cdots\cup U_n$ defines an inclic and for any multiplicities
$m_{ij}$ we can inductively compute $h(I_X,t)$ for any $t$, for $X=\sum_{ij}m_{ij}u_{ij}$.
Indeed, define $X_1=\sum_jm_{1j}u_{1j}$, and then $X_2=X_1+\sum_jm_{2j}u_{2j}$,
and in general $X_k=X_{k-1}+\sum_jm_{kj}u_{kj}$. Since we know
$h((X_1)_i,t)$ for all $i$ and $t$, Theorem \ref{HF} gives us
$h((X_2)_i,t)$ for all $i$ and $t$, and similarly $h((X_k)_i,t)$ for each $k$
in turn for all $i$ and $t$, and hence eventually $h(X_n,t)$ for all $t$.

Our result also handles other constructions. For example, instead of starting with points
in $\pr 1$, we could start with a star configuration of points in $\pr 2$ (i.e., the points of
pair-wise intersection of a finite set of lines, no three of which meet at
any single point, see \cite{GHM}). Let $S$ be the scheme theoretic sum of the points of the star
and consider the scheme $iS$. The Hilbert function of $iS$ is known for all $i$
(\cite{CHT}), so we can proceed as above to construct an $X_n$, as long as
in this case we assign the same multiplicity to each point of $S$
(the Hilbert function is not always known if the multiplicities of the points of $S$
are allowed to vary).
\end{remark}

Given a closed subscheme $X\subset \pr n$ with corresponding ideal
$I_X$, define $\alpha(X)=\alpha(I_X)$ to be the least
degree $t$ such that there is a non-trivial form $F\in (I_X)_t$.

\begin{lem}\label{alfadecrlem}
With the  previous notation, there
is a least $j\geq0$ such that $\alpha((I_{Y'_j})^{(k,l_0)})= l_0$.
Let $l'=\max(l_1,\dots,l_r)$ and let $d$ be this least $j$.
If, moreover, $\operatorname{char}(K)=0$, then
$$0=\alpha(I_{Y'_{l'}})<\alpha(I_{Y'_{l'-1}})<\alpha(I_{Y'_{l'-2}})
<\ldots<\alpha(I_{Y'_0})$$
and
$$l_0=\alpha((I_{Y'_d})^{(k,l_0)})<\alpha((I_{Y'_{d-1}})^{(k,l_0)})<\alpha((I_{Y'_{d-2}})^{(k,l_0)})
<\ldots<\alpha((I_{Y'_0})^{(k,l_0)}).$$
\end{lem}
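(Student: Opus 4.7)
The plan is to treat the three assertions of the lemma in turn. For the existence of $d$, I would note that $l_i(l')=0$ for every $i$ by the definition of $l'=\max_i l_i$, so $Y'_{l'}$ is empty and $I_{Y'_{l'}}=R'$. Consequently $(I_{Y'_{l'}})^{(k,l_0)}=(I_{L_0'})^{l_0}$, whose minimal nonzero degree is exactly $l_0$. Since $(I_{Y'_j})^{(k,l_0)}\subseteq (I_{L_0'})^{l_0}$ for all $j$, we always have $\alpha((I_{Y'_j})^{(k,l_0)})\geq l_0$, so the set of $j$ attaining equality is nonempty (containing $l'$) and has a least element $d$. The weak inequalities $\alpha(I_{Y'_{j+1}})\leq \alpha(I_{Y'_j})$ and $\alpha((I_{Y'_{j+1}})^{(k,l_0)})\leq \alpha((I_{Y'_j})^{(k,l_0)})$ follow from $Y'_{j+1}\subseteq Y'_j$, which reverses the ideal containments.

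To promote these weak inequalities to strict ones under $\operatorname{char}(K)=0$, I would differentiate. For any linear subvariety $V\subset \pr{n-1}$ and any partial $\partial/\partial x_i$, Leibniz yields $\partial(I_V^m)\subseteq I_V^{\max(0,m-1)}$, so intersecting over $V=L_1,\ldots,L_r$ gives $\partial(I_{Y'_j})\subseteq I_{Y'_{j+1}}$. For the first chain I would fix $j<l'$ and choose $F\in I_{Y'_j}$ of degree $\alpha(I_{Y'_j})$. Since $Y'_j\neq\emptyset$ forces $\deg F\geq 1$, some $\partial F/\partial x_i$ is nonzero in characteristic $0$; this partial lies in $I_{Y'_{j+1}}$ with degree $\alpha(I_{Y'_j})-1$, forcing $\alpha(I_{Y'_{j+1}})<\alpha(I_{Y'_j})$. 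Iterating from $j=l'$, where $\alpha(I_{Y'_{l'}})=0$, produces the whole strictly decreasing sequence.

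For the chain involving $(I_{Y'_j})^{(k,l_0)}$, the derivative must moreover remain in $(I_{L_0'})^{l_0}$. Since $I_{L_0'}=(x_{k+1},\ldots,x_n)$, the operators $\partial/\partial x_i$ with $i\leq k$ preserve every power of $I_{L_0'}$ and are therefore automatically safe. I would take $F\in (I_{Y'_j})^{(k,l_0)}$ of minimal degree: if some $\partial F/\partial x_i$ with $i\leq k$ is nonzero, it is the required element of $(I_{Y'_{j+1}})^{(k,l_0)}$ of degree $\alpha((I_{Y'_j})^{(k,l_0)})-1$. The main obstacle is the remaining case, in which every such partial vanishes; in characteristic $0$ this forces $F\in K[x_{k+1},\ldots,x_n]$, so that $F$ itself lies in $(I_{L_0'})^{\deg F}$. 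Here I would exploit the assumption $j<d$: by minimality of $d$, $\deg F=\alpha((I_{Y'_j})^{(k,l_0)})>l_0$, hence $\deg F-1\geq l_0$. A nonzero partial $\partial F/\partial x_i$ with $i>k$ (which exists in characteristic $0$ since $F$ has positive degree in $x_{k+1},\ldots,x_n$) then lies in $I_{Y'_{j+1}}\cap (I_{L_0'})^{\deg F-1}\subseteq (I_{Y'_{j+1}})^{(k,l_0)}$ with strictly smaller degree. This also explains why the chain is only claimed down to $d$: at $j=d$ the strict inequality $\deg F>l_0$ fails and the second case of the argument breaks down.
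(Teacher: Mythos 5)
Your proposal is correct and follows essentially the same route as the paper: use Euler's identity in characteristic $0$ to produce a nonzero partial derivative of a minimal-degree element, together with the fact that differentiation carries $I_{Y'_j}$ into $I_{Y'_{j+1}}$. In fact your two-case analysis for the chain involving $(I_{Y'_j})^{(k,l_0)}$ (partials with respect to $x_1,\ldots,x_k$ preserve $(I_{L_0'})^{l_0}$ outright, while a partial with respect to $x_{k+1},\ldots,x_n$ lands in $(I_{L_0'})^{l_0}$ only because in that case $F\in K[x_{k+1},\ldots,x_n]$ has degree $>l_0$) is more careful than the paper's own proof, which simply asserts that all partials of $F$ remain in $(x_{k+1},\dots,x_n)^{l_0}$.
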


\begin{proof}
By definition, $(I_{Y'_j})^{(k,l_0)}\subset (x_{k+1},\ldots,x_n)^{l_0}$
so $\alpha((I_{Y'_j})^{(k,l_0)})\geq l_0$ for all $j$.
But for $j\geq l'$ we have $((I_{Y'_j})^{(k,l_0)})=(x_{k+1},\ldots,x_n)^{l_0}$
so $\alpha((I_{Y'_j})^{(k,l_0)})=l_0$.
Thus there is a least $j$ such that $\alpha((I_{Y'_j})^{(k,l_0)})= l_0$
so $d$ is defined.

Since ${Y'_{l'}}=\varnothing$, we have $I_{Y'_{l'}}=(1)$, so $\alpha(I_{Y'_{l'}})=0$.
Now assume $\operatorname{char}(K)=0$. Consider any
non-zero homogeneous element $F\in I_{Y'_j}$ for $j<l'$.
By Euler's identity, not all of the partials of $F$ are $0$. However, they all
belong to $I_{Y'_{j+1}}$ and the non-zero ones have degree $\deg(F)-1$. Therefore
$\alpha(I_{Y'_{j+1}})\leq\alpha(I_{Y'_j})-1$,
so we have
$$0=\alpha(I_{Y'_{l'}})<\alpha(I_{Y'_{d-1}})<\alpha(I_{Y'_{d-2}})
<\ldots<\alpha(I_{Y'_0})$$
as claimed.

The argument for the second claim is similar. Let $j<d$ and
consider any non-zero homogeneous element
$F\in (I_{Y'_j})^{(k,l_0)})\subseteq I_{Y'_j}$, so $\deg(F)>l_0$,
since also $F\in (I'_{L_0})^{l_0}$.
Again not all of the partials of $F$ are $0$ but they all
belong to $I_{Y'_{j+1}}$. Since $\deg(F)>l_0$, they all also belong to
$(x_{k+1},\dots,x_n)^{l_0}$ and hence to $(I_{Y'_{j+1}})^{(k,l_0)}$.
Therefore $\alpha((I_{Y'_{j+1}})^{(k,l_0)})\leq\alpha((I_{Y'_j})^{(k,l_0)})-1$,
so we have
$$l_0=\alpha((I_{Y'_d})^{(k,l_0)})<\alpha((I_{Y'_{d-1}})^{(k,l_0)})<\alpha((I_{Y'_{d-2}})^{(k,l_0)})
<\ldots<\alpha((I_{Y'_0})^{(k,l_0)}).$$
\end{proof}

\begin{cor}\label{alfa}
Let $L_0, L_1,\ldots,L_r, H_0, H_1,\ldots,H_s\subsetneq \pr {n}$ be an inclic, and let
$W=\sum_{i\geq 0}l_iL_i$ and
$X=\sum_{i\geq 0}l_iL_i+\sum_{j>0}h_jH_j$ for
non-negative integers $l_i$ and $h_i$.
Let $Y'_i$ be as above, and let $l'=\max(l_1,\ldots,l_r)$ and $h=h_1+\cdots+h_s$.
Then $\alpha(X)=h+\alpha(W)$ and
$\max(l',l_0)\leq \alpha(W)\leq l'+l_0$. Moreover, there
is a least $j\geq0$ such that $\alpha((I_{Y'_j})^{(k,l_0)})= l_0$. Taking this least $j$ to be $d$, we have
$$\alpha(W)\leq l_0+d,$$
with $\alpha(W)= l_0+d$ if $\operatorname{char}(K)=0$.
\end{cor}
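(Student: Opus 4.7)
The plan is to reduce everything to the direct-sum decomposition of $I_W$ provided by Theorem~\ref{HF} and then combine it with the strict-inequality chains from Lemma~\ref{alfadecrlem}.

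First I would dispose of the equality $\alpha(X)=h+\alpha(W)$. Theorem~\ref{HF} gives $I_X=\eta_1^{h_1}\cdots\eta_s^{h_s}I_W$, and since the $\eta_j$ are nonzero linear forms in the polynomial ring $R$, multiplication by $\eta_1^{h_1}\cdots\eta_s^{h_s}$ is injective and shifts initial degrees by exactly $h=h_1+\cdots+h_s$. The existence of the least $j\geq 0$ with $\alpha((I_{Y'_j})^{(k,l_0)})=l_0$ is exactly the first statement of Lemma~\ref{alfadecrlem}, so $d$ is well defined.

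Next I would analyze $\alpha(W)$ via the decomposition
$$I_W=\bigoplus_j x_0^j(I_{Y'_j})^{(k,l_0)}$$
from Theorem~\ref{HF}. Because the sum is direct as $K$-vector spaces and the summand in bi-degree $(i,j)$ is $x_0^j((I_{Y'_j})^{(k,l_0)})_i$ (of total degree $i+j$), we have
$$\alpha(I_W)=\min_{j\geq 0}\bigl(j+\alpha((I_{Y'_j})^{(k,l_0)})\bigr).$$
For the lower bound $\alpha(W)\geq\max(l',l_0)$, any $F\in I_W$ must vanish to order $l_i$ at each $L_i$, so $\deg F\geq l_i$ for every $i$; taking $i=0$ and the $i$ attaining $l'$ gives $\alpha(W)\geq l_0$ and $\alpha(W)\geq l'$. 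For the upper bound $\alpha(W)\leq l'+l_0$, plug $j=l'$ into the formula above: by Theorem~\ref{HF} the summand at $j=l'$ is $x_0^{l'}I_Z$, whose minimal degree is $l'+l_0$. For the refined bound $\alpha(W)\leq l_0+d$, plug $j=d$ into the formula: by definition of $d$ we get $d+\alpha((I_{Y'_d})^{(k,l_0)})=d+l_0$.

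Finally, for the equality in characteristic zero I would use the strict chain
$$l_0=\alpha((I_{Y'_d})^{(k,l_0)})<\alpha((I_{Y'_{d-1}})^{(k,l_0)})<\cdots<\alpha((I_{Y'_0})^{(k,l_0)})$$
from Lemma~\ref{alfadecrlem}. This gives $\alpha((I_{Y'_j})^{(k,l_0)})\geq l_0+(d-j)$ for all $0\leq j\leq d$, hence $j+\alpha((I_{Y'_j})^{(k,l_0)})\geq l_0+d$ in that range. For $j>d$ we have $\alpha((I_{Y'_j})^{(k,l_0)})\geq l_0$ (always true since $(I_{Y'_j})^{(k,l_0)}\subseteq(x_{k+1},\ldots,x_n)^{l_0}$), so $j+\alpha((I_{Y'_j})^{(k,l_0)})\geq j+l_0>d+l_0$. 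Thus the minimum of the sequence is attained at $j=d$ and equals $l_0+d$, yielding $\alpha(W)=l_0+d$.

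There is no real obstacle here; the only point requiring a little care is translating the bi-graded direct sum decomposition into the formula $\alpha(I_W)=\min_j(j+\alpha((I_{Y'_j})^{(k,l_0)}))$, which is what ties the previous two results together.
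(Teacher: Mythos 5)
Your proposal is correct and follows essentially the same route as the paper: it derives $\alpha(W)=\min_j\bigl(j+\alpha((I_{Y'_j})^{(k,l_0)})\bigr)$ from the decomposition $(*)$ in Theorem~\ref{HF}, gets the elementary bounds the same way (the paper exhibits $x_0^{l'}x_n^{l_0}\in I_W$ where you evaluate the $j=l'$ summand, which is the same thing), and closes the characteristic-zero case with the strict chain from Lemma~\ref{alfadecrlem} exactly as the paper does. No gaps.
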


\begin{proof}
Since $I_X=\eta_1^{h_1}\cdots\eta_s^{h_s}I_W$, where
$\eta_i$ is the linear form defining $H_i$, we see that
$\alpha(X)=h+\alpha(W)$. Since
$l_1,\ldots,l_r,l_0\leq \alpha(W)$, the lower bound
$\max(l',l_0)\leq \alpha(W)$ holds. Since
$x_0^{l'}x_n^{l_0}\in I_W$,  the upper bound
$\alpha(W)\leq h+l'+l_0$ also holds.

To get more precise information, note by $(^*)$ in the proof of Theorem \ref{HF} that
$$\alpha(W)=\alpha(I_W)=\min_j\alpha(x_0^j(I_{Y'_j})^{(k,l_0)})=\min_j(j+\alpha((I_{Y'_j})^{(k,l_0)})).$$

By definition, $(I_{Y'_j})^{(k,l_0)}\subset (x_{k+1},\ldots,x_n)^{l_0}$
so $\alpha((I_{Y'_j})^{(k,l_0)})\geq l_0$ for all $j$.
But for $j\geq l'$ we have $((I_{Y'_j})^{(k,l_0)})=(x_{k+1},\ldots,x_n)^{l_0}$
so $\alpha((I_{Y'_j})^{(k,l_0)})=l_0$ and hence $\alpha(I_W)\leq l'+l_0$.
Thus there is a least $j$ such that $\alpha((I_{Y'_j})^{(k,l_0)})= l_0$
so $d$ is defined. Thus
we have $\alpha(I_W)\leq d+l_0$, and in addition we have
$d+l_0\leq j+ \alpha((I_{Y'_j})^{(k,l_0)})$
for all $j\geq d$.

Now assume $\operatorname{char}(K)=0$. By Lemma \ref{alfadecrlem} we have
$$l_0+d\leq\alpha((I_{Y'_{d-1}})^{(k,l_0)})+(d-1)\leq\alpha((I_{Y'_{d-2}})^{(k,l_0)})+(d-2)\leq\ldots
\leq\alpha((I_{Y'_0})^{(k,l_0)})+(d-d),$$
and hence $\alpha(I_{{Y'_j}^{(k,l_0)}})+j\geq l_0+d$ for all $j<d$, and therefore
$\alpha(W)=l_0+d$, as claimed.
\end{proof}

\section{Galaxies}\label{galsec}

In this section we prove Theorem \ref{galG}. In order to compute $\gamma(I_X)$, we will
determine $\alpha((I_X)^{(j)})$ for an unbounded sequence of values of $j$.
Our inductive procedure requires information about star configurations as a starting point.
The following result is from \cite{BH}.

\begin{thm}\label{BHthm}
Let $1\leq e\leq n<u$ be integers.
Let $A\subset\pr n$ be the reduced scheme theoretic union of the linear varieties
comprising the star configuration $S(n,e,u)$. Then $\alpha(reA)=ru$ for any integer $r\geq 1$,
$\alpha(A)=u-e+1$ and, if $e=n$, $\reg(I_A)=u-n+1$.
\end{thm}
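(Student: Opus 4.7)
The plan is to obtain the upper bounds by exhibiting explicit forms as products of the linear equations $\ell_i$ of the hyperplanes $A_i$, and then to match them with inductive lower bounds by restricting to a single hyperplane $A_u$; part (c) will then fall out of part (b) by a dimension count. For (a), the form $F=(\ell_1\cdots\ell_u)^r$ has degree $ru$ and vanishes to order at least $re$ along every component $A_{i_1}\cap\cdots\cap A_{i_e}$ (the factors $\ell_{i_1},\ldots,\ell_{i_e}$ each contribute order $r$), so $F\in I_A^{(re)}=I_{reA}$, giving $\alpha(reA)\leq ru$. Specializing $r=1$ and instead taking a product of only $u-e+1$ of the $\ell_i$, the pigeonhole identity $e+(u-e+1)=u+1$ forces each component to contain at least one of the chosen hyperplanes, giving $\alpha(A)\leq u-e+1$.

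For the lower bound in (b), I would induct on $u$ with base $u=e$ (where $A$ is a single point and $\alpha(A)=1$). Suppose $0\neq F\in(I_A)_d$ with $d<u-e+1$. Restriction to $A_u$ yields a form $\overline{F}$ of degree $d$ on $A_u\cong\pr{n-1}$ vanishing on the star configuration $S(n-1,e-1,u-1)$ consisting of the components of $S(n,e,u)$ contained in $A_u$. By induction $\alpha(S(n-1,e-1,u-1))=u-e+1>d$, so $\overline{F}=0$ and $F=\ell_u G$ with $\deg G=d-1$. But then $G$ must vanish on the residual star configuration $S(n,e,u-1)$ of components not contained in $A_u$, whose initial degree is $(u-1)-e+1=u-e>d-1$ by induction, a contradiction.

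Part (a) for $r\geq 2$ is the main obstacle I expect, since the target value $ru$ is sharp and any slack in the induction would be fatal. The approach I would take is the same restriction-to-$A_u$ strategy while tracking multiplicities: write a nonzero $F\in I_{reA}$ of minimum degree as $F=\ell_u^a G$ with $\ell_u\nmid G$; then $\overline{G}$ must vanish to order $re$ along $S(n-1,e-1,u-1)\subset A_u$, while $G$ itself vanishes to order $re-a$ along $S(n,e,u-1)$, so combining two inductive inequalities should force $a+\deg G\geq ru$. The delicate point is balancing these two inputs so the sum comes out to exactly $ru$; this may require phrasing the induction as the Waldschmidt-type bound $\alpha(mA)/m\geq u/e$ for all $m$ (which implies the claim by specializing to $m=re$) rather than the equality for each $r$ individually. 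Finally, for (c) with $e=n$, the scheme $A$ is zero-dimensional of length $\binom{u}{n}=h(R,u-n)$, and (b) gives $h(I_A,u-n)=0$, hence $h(A,u-n)=\binom{u}{n}$, so the points impose independent conditions in degree $u-n$ (and clearly not below), whence $\reg(I_A)=u-n+1$.
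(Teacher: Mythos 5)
First, note that the paper does not prove this statement at all --- it is quoted from \cite{BH} --- so there is no internal proof to compare against; your restriction-to-a-hyperplane induction is nonetheless the standard route for results of this kind. Your upper bounds (the form $(\ell_1\cdots\ell_u)^r$ for (a) and a product of $u-e+1$ of the $\ell_i$ plus pigeonhole for (b)), your inductive lower bound for $\alpha(A)$, and your deduction of (c) from (b) via $\binom{u}{n}=\binom{(u-n)+n}{n}$ together with the fact quoted at the start of Section 2 are all correct.

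The genuine gap is exactly where you flag it: the lower bound $\alpha(reA)\geq ru$ for $r\geq 2$, which is the only part the paper actually needs (it seeds the induction computing $\gamma(I_G)$). As written, your inductive step cannot be completed because the two multiplicities are swapped. Writing $F=\ell_u^aG$ with $\ell_u\nmid G$: on a component $L\subset A_u$ of $S(n-1,e-1,u-1)$ the factor $\ell_u^a$ contributes exactly $a$ to the order of vanishing, so it is $\overline{G}$ that vanishes to order $\geq re-a$ there, not $re$; on a component of $S(n,e,u-1)$, which is \emph{not} contained in $A_u$, the factor $\ell_u^a$ contributes nothing, so $G$ vanishes to the full order $re$ there, not $re-a$. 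With your version the first inequality would read $\deg G\geq re(u-1)/(e-1)$, which in general exceeds $ru$ and contradicts your own upper bound. The repair is the reformulation you suspect is needed: prove $\deg F\geq mu/e$ for every nonzero $F$ vanishing to order $\geq m$ on all components of $S(n,e,u)$, by induction on $(e,u)$ with base case $e=1$ (where $I_A^{(m)}=((\ell_1\cdots\ell_u)^m)$). The corrected inequalities are $\deg G\geq (m-a)(u-1)/(e-1)$ and $\deg G\geq m(u-1)/e$; if $a\geq m/e$ the second gives $\deg F=a+\deg G\geq mu/e$, while if $a<m/e$ the first gives $\deg F\geq a+(m-a)(u-1)/(e-1)$, a strictly decreasing function of $a$ (since $u>e$) whose value at $a=m/e$ is $mu/e$, hence $\deg F>mu/e$. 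Setting $m=re$ recovers $\alpha(reA)\geq ru$. Until this bookkeeping is carried out, the central claim of the theorem remains unproved in your write-up.
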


\begin{proof}[{Proof of Theorem \ref{galG}}]
Let ${\mathcal G}={\mathcal G}(n,N,e,u,N;S(n,e,u),{\mathcal H})$ and
let the points of the galactic halo ${\mathcal H}$ be $P_1,\ldots,P_N$.
Let $G_0=S(n,e,u)=A\subset\pr n$, $G_1=A+P_1\subset\pr {n+1}, \ldots,
G=G_N=A+P_1+\cdots+P_N\subset\pr {n+N}$.

The bounds on $\rho(I_{G})$
come from $\alpha(I_G)/\gamma(I_G)\leq \rho(I_G)$ and,
when $e=n$, $\rho(I_G)\leq\reg(I_G)/\gamma(I_G)$ \cite{BH}.
Since $G$ spans $\pr{n+N}$, we see $1<\alpha(I_G)$, but
$G$ is contained in the span of $\pr n$ and $N$ points, each of which
is contained in a hyperplane in $\pr{n+N}$, so $\alpha(I_G)\leq 2$;
thus $\alpha(I_G)=2$. And by Lemma \ref{reglem} with $e=n$, $\reg(I_G)=\reg(I_{G_0})$,
but $\reg(I_{G_0})=u-n+1$ by Theorem \ref{BHthm}.

Now define the following sequence: $a_0=re$, $a_1=ru$, and for $i\geq0$, let
$a_{i+2}=2a_{i+1}-a_i$. It's easy to check that
$a_i=iru-(i-1)re$.
In what comes below, for each $i$ we regard $I_{G_i}$ as an ideal in $K[\pr{n+i}]$.
We begin by noting that $a_1=\alpha(I_{a_0G_0})$.
We will show by induction that $a_{i+1}=\alpha(I_{a_iG_i})$, and hence that
$\alpha(I_{a_NG_N})=(N+1)ru-Nre$, so
$\gamma(I_{G_N})=\lim_{r\to\infty}((N+1)ru-Nre)/(Nru-(N-1)re)=
((N+1)u-Ne)/(Nu-(N-1)e)$, as claimed.

To show that $a_{i+1}=\alpha(I_{a_iG_i})$ we will apply Corollary \ref{alfa}.
The $W$ of Corollary \ref{alfa} is $G_i$; $L_0=P_i$ and the $L_j$, $j>0$ are the components
of $A$ and the points $P_1,\ldots,P_{i-1}$; $H_0$ is the linear span of
$\pr n$ and $P_1,\ldots,P_{i-1}$; $\pr n$ there is $\pr {n+i}$ here; and $l'=l_j=a_i$ for all $j$.
Moreover, $k$ in the corollary is 0, since $L_0$ is a point.
(Here there are no $H_j$ for $j>0$, and $l_0=a_i$.)
The result of the corollary is that $\alpha(a_iG_i)=a_i+d$, where $d$ is the least $j$
such that what is there called $Y'_j$ has $\alpha((I_{Y'_j})^{(0,l_0)}=l_0$.
But $Y'_j=(a_i-j)G_{i-1}$ (as long as $a_i-j\geq 0$), and $(I_{Y'_j})^{(0,l_0)}$
is just the truncation of $I_{Y'_j}$ at degree $l_0$. Thus the least $j$ such
that $\alpha((I_{Y'_j})^{(0,l_0)})=l_0$ is the least $j$ such that
$\alpha((a_i-j)G_{i-1})\leq a_i$.
But $\alpha((a_i-j)G_{i-1})=a_i$ for $j=a_i-a_{i-1}$
by induction, and $\alpha((a_{i-1})G_{i-1})<\alpha((a_{i}-j)G_{i-1})$ for $j<a_i-a_{i-1}$
by Lemma \ref{alfadecrlem}, so $a_i-a_{i-1}$ is the least $j$. Thus
$\alpha(I_{a_iG_i})=a_i+(a_i-a_{i-1})=a_{i+1}$, as claimed.
\end{proof}


\begin{thebibliography}{MMM}

\bibitem[BH]{BH} C.~Bocci and B.~Harbourne,
{\it Comparing powers and symbolic power of ideals},
J.~Algebraic Geom. {\bf 19} (2010), 399--417.

\bibitem[BH2]{BH2}
C.~Bocci and B.~Harbourne,
{\it The resurgence of ideals of points and the containment problem},
Proc.~Amer.~Math.~Soc.  {\bf 138}  (2010), 1175--1190.

\bibitem[Ch]{Ch}  G.~V.~Chudnovsky. {\it Singular points on complex
hypersurfaces and multidimensional
Schwarz Lemma}, S\'eminaire de Th\'eorie des Nombres, Paris 1979--80,
S\'eminaire Delange-Pisot-Poitou, Progress in Math vol. 12, M-J Bertin, editor,
Birkh\"auser, Boston-Basel-Stutgart (1981).

\bibitem[CHT]{CHT} S.~Cooper, B.~Harbourne  and Z.~Teitler.
{\it Combinatorial bounds on Hilbert functions of fat points in projective space},
J.~Pure~Appl.~Algebra, Volume 215, Issue 9, September 2011,
Pages 2165--2179, arXiv:0912.1915.

\bibitem[D]{D} H.~Derksen. {\it Hilbert series of subspace arrangements},
J. Pure Appl. Algebra 209 (2007), no.1, 91-98.

\bibitem[DG]{DG} E.~Davis, A.V.~Geramita {\it The Hilbert function of a special class
of $1$-dimensional Cohen-Macaulay graded algebras}, The Cur5ves Seminar at Queen's, Vol. X,
Queen's Papers in pure and Applied Mathematics, Vol.67 (1984), 1H-29H.

\bibitem[DS]{DS} H.~Derksen and J.~Sidman. {\it A sharp bound for the Castelnuovo-Mumford regularity
of subspace arrangements}, Adv. Math. 172 (2002), no. 2, 151--157.

\bibitem[DHST]{DHST} M.~Dumnicki, B.~Harbourne, T.~Szemberg and H.~Tutaj-Gasi\'nska.
{\em Linear subspaces, symbolic powers and Nagata type conjectures}, preprint 2012, 20 pp., arXiv:1207.1159.

\bibitem[ELS]{ELS} L. Ein, R. Lazarsfeld, and K.E. Smith.
{\it Uniform behavior of symbolic powers of ideals}, Invent. Math.,
144 (2001), 241--252.

\bibitem[EV]{EV}
Hd.\ Esnault and E.\ Viehweg.
{\it Sur une minoration du degr\'e d'hypersurfaces
s'annulant en certains points}, Math. Ann. 263 (1983), no. 1, 75--86.

\bibitem[Fa]{Fa} G.~Fatabbi.
{\it On the resolution of ideals of fat points}, J.~Algebra 242 (2001), 92--108.

\bibitem[FHL]{FHL} G.~Fatabbi, B.~Harbourne and A.~Lorenzini.
{\it Resolutions of ideals of fat points with support in a hyperplane},
Proc.~Amer.~Math.~Soc. 134 (2006) 3475--3483.

\bibitem[FaL]{FaL} G.~Fatabbi and A.~Lorenzini. {\it On the graded
resolution of ideals of a few general fat points of $\pr n$},
J.~Pure~Appl.~Algebra 198 (2005) 123--150.

\bibitem[FrL]{FrL}  S.~Franceschini and A.~Lorenzini.
{\it Fat points of $P^n$ whose support is contained in a linear
proper subspace}, J.~Pure~Appl.~Algebra 160 (2001), no. 2--3,
169--182.

\bibitem[Fr]{Fr} C.~A.~Francisco. {\it Resolution of small sets of fat points},
J.~Pure~Appl.~Algebra 203 (2005), 220--236

\bibitem[FMN]{FMN} C.~A.~Francisco, J.C.~Migliore and U.~Nagel.
{\it On the componentwise linearity and the minimal free
resolution of a tetrahedral curve}, J.~Algebra 299 (2006),
535--569

\bibitem[GHM]{GHM} A.~Geramita, B.~Harbourne and J.~Migliore.
{\em Star configurations in $\pr {n}$},
J.~Algebra 376 (2013) 279--299 (arXiv:1203.5685).

\bibitem[GMS]{GMS} A.~V.~Geramita, J.~Migliore and L.~Sabourin.
{\em On the first infinitesimal neighborhood of a linear configuration of
points in $\mathbb P^2$},
J.~Algebra 298 (2006),  563--611.

\bibitem[GO]{GO} A.~V.~Geramita and F.~Orecchia. {\it On the Cohen-Macaulay type of $s$ lines in
${\bf A}^{n+1}$}, J.~Algebra  70 (1981), 116--140.

\bibitem[GHV1]{GHV1} E.~Guardo, B.~Harbourne and A.~Van Tuyl.
{\em Asymptotic resurgences for ideals of positive dimensional
subschemes of projective space}, preprint 2012, 12pp., arXiv:1202.4370.

\bibitem[GHV2]{GHV2} E.~Guardo, B.~Harbourne and A.~Van Tuyl.
{\em Fat lines in $\pr {3}$: powers versus symbolic powers},
preprint 2012, 10 pp., arXiv:1208.5221.

\bibitem[HaHu]{HaHu} B.~Harbourne and C.~Huneke.
{\it Are symbolic powers highly evolved?},
To appear J. Ramanujan Math. Soc. {\tt arXiv:1103.5809v1}

\bibitem[HH]{HH} R.~Hartshorne and A.~Hirschowitz. {\it Droites en position g\'en\'erale dans
l'espace projectif}, volume 961 of Lecture Notes in Mathematics (1982), 169--188.

\bibitem[HS]{HS} A.~Hirschowitz and C.~Simpson. {\it Minimal resolution of the ideal of a general arrangement
of a large number of points in $\pr n$}, Inv. Math. 126 (1996), 467--503.

\bibitem[HoHu]{HoHu}
M.\ Hochster and C.\ Huneke. {\it Comparison of symbolic and
ordinary powers of ideals}, Invent. Math. {\bf 147}
(2002), no.~2, 349--369.

\bibitem[La]{La}
  R.\ Lazarsfeld.
 {\it Positivity in Algebraic Geometry
  I.-II}. Ergebnisse der Mathematik und ihrer Grenzgebiete, Vols.
  48-49, Springer Verlag, Berlin, 2004.

\bibitem[L]{L} A.~Lorenzini. {\it The minimal resolution conjecture},
J.~Algebra  156 (1993), 5--35.

\bibitem[M]{M} S.~Mayes. {\it The asymptotic behaviour of symbolic generic initial
systems of points in general position}, preprint 2012, 25 pp., arXiv:1210.1622.

\bibitem[V]{valla} G.~Valla. {\em Betti numbers of some monomial
ideals}, Proc.~Amer.~Math.~Soc., {\textbf 133} (2005), 57-63.

\bibitem[W]{W} M.~Waldschmidt.
{\it Propri\'et\'es arithm\'etiques de fonctions de plusieurs variables II},
In S\'eminaire P.~Lelong (Analyse), 1975--76, Lecture Notes Math. 578,
Springer-Verlag, 1977, 108--135.

\bibitem[W2]{W2} M.~Waldschmidt.
{\it Nombres transcendants et groupes alg\'ebriques},
Ast\'erisque 69/70, Soci\'ete Math\'ematiqu\'e de France, 1979.

\end{thebibliography}
\end{document}